\numberwithin{equation}{section}
\def\ni{\noindent}
\def\si{\sigma}
\def\R{\mathbf{R}}
\def\Z{\mathbf{Z}}
\newtheorem{theo}{Theorem}[section]
\newtheorem{lemm}[theo]{Lemma}
\newtheorem{prop}[theo]{Proposition}
\newtheorem{rema}[theo]{Remark}
\newtheorem{defi}[theo]{Definition}
\begin{document}

\title{On the traces of elements of modular group}
\author{ Bin Wang and Xinyun Zhu}
\address{Department of Mathematics, Changshu Institute of Technology, Changshu 215500, China}
\email{binwang72@gmail.com}
\address{Department of Mathematics, University of Texas of the Permian Basin, Odessa, TX, 79762}
\email{zhu\_x@utpb.edu}
\maketitle

\begin{abstract}
{We prove a conjecture by W. Bergweiler and A. Eremenko on the traces of elements of modular group in this paper. }
\end{abstract}
\section{Introduction}
W. Bergweiler and A. Eremenko made a remarkable conjecture on the traces of elements of modular group in \cite{E}. The main result of this paper is to prove their conjecture. We expect this result to have future applications in some fields such as control theory.

Let $A =\left( \begin{array}{cc}
1 & 2  \\
0 & 1
\end{array} \right)$ and $B =\left( \begin{array}{cc}
1 & 0  \\
-2 & 1
\end{array} \right)$. These two matrices generate the free group which is called $\Gamma(2)$, the principal
congruence subgroup of level 2. With arbitrary integers $m_{j} \neq 0, n_{j} \neq 0$, consider the trace of the product
$$p_k(m_1, n_1,..., m_k, n_k) = tr(A^{m_1}B^{n_1} \cdots A^{m_k}B^{n_{k}}).$$
It is easy to see that $p_k$ is a polynomial in $2k$ variables with integer coefficients.
This polynomial can be written explicitly though the formula is somewhat complicated.

Choosing an arbitrary sequence $\sigma$ of $2k$ signs $\pm $, we make a substitution
$$ p_{k}^{\sigma}(x_1, y_1,...x_k, y_k)=p_k(\pm (1+x_1), \pm (1+y_1),\cdots , \pm (1+x_k), \pm (1+y_k)). $$
Our main theorem is the following one.
\begin{theo} \label{main}
The polynomial $p_k$, for every $k>0$, has the property that for every $\sigma$, all the coefficients of the polynomial $p_{k}^{\sigma}$ are of the same sign, that is, the sequence of coefficients of  $p_{k}^{\sigma}$ has no sign changes. \end{theo}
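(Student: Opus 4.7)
The plan is to proceed by induction on $k$, strengthening the claim to a statement about the entire matrix product $M_k^\sigma := (A^{m_1}B^{n_1}\cdots A^{m_k}B^{n_k})^\sigma$ rather than only its trace. The starting point is the direct computation $A^mB^n = \bigl(\begin{smallmatrix}1-4mn & 2m \\ -2n & 1\end{smallmatrix}\bigr)$. After substitution, each factor $C_j^\sigma$ has entries with the uniform sign pattern $\bigl(\begin{smallmatrix}-\epsilon_j\delta_j & \epsilon_j\\ -\delta_j & +\end{smallmatrix}\bigr)$, and conjugation by the diagonals $\mathrm{diag}(\epsilon_j,1)$ and $\mathrm{diag}(-\delta_j,1)$ yields the normalized factor $\tilde C_j = \bigl(\begin{smallmatrix}-\epsilon_j\delta_j+4(1+x_j)(1+y_j) & 2(1+x_j)\\ 2(1+y_j) & 1\end{smallmatrix}\bigr)$, whose entries are polynomials with non-negative coefficients.

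The inductive claim is that $M_k^\sigma$ admits a factorization $M_k^\sigma = D_1^{(k)}\tilde M_k^\sigma D_2^{(k)}$ into $\pm 1$-diagonal matrices and a matrix $\tilde M_k^\sigma$ satisfying, coefficient-wise in $(x_1,\ldots,y_k)$, the three properties: (i) all four entries of $\tilde M_k^\sigma$ have non-negative coefficients; (ii) \emph{row and column dominance}, $(\tilde M_k^\sigma)_{1j}\geq (\tilde M_k^\sigma)_{2j}$ and $(\tilde M_k^\sigma)_{i1}\geq (\tilde M_k^\sigma)_{i2}$ for $i,j\in\{1,2\}$; (iii) \emph{diagonal dominance}, $(\tilde M_k^\sigma)_{11}+(\tilde M_k^\sigma)_{22}\geq (\tilde M_k^\sigma)_{12}+(\tilde M_k^\sigma)_{21}$. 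The base case $k=1$ with $\tilde M_1^\sigma=\tilde C_1$ is verified directly; for example, column dominance follows from $(\tilde C_j)_{11}-(\tilde C_j)_{12} = (2-\epsilon_j\delta_j)+2x_j+4y_j+4x_jy_j\geq 0$.

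The inductive step rewrites $M_{k+1}^\sigma = D_1^{(k)}\tilde M_k^\sigma\,B_k\,\tilde C_{k+1}\,\mathrm{diag}(-\delta_{k+1},1)$ where $B_k:=D_2^{(k)}\mathrm{diag}(\epsilon_{k+1},1)\in\{\pm I,\pm\mathrm{diag}(1,-1)\}$ is a ``bridge'' diagonal matrix. If $B_k=\pm I$, one sets $\tilde M_{k+1}^\sigma:=\tilde M_k^\sigma\tilde C_{k+1}$ (absorbing any overall sign into $D_1^{(k+1)}$), and the three properties transfer from $\tilde M_k^\sigma$ and $\tilde C_{k+1}$ via direct identities such as $(\tilde M_{k+1})_{1j}-(\tilde M_{k+1})_{2j}=[(\tilde M_k)_{11}-(\tilde M_k)_{21}](\tilde C_{k+1})_{1j}+[(\tilde M_k)_{12}-(\tilde M_k)_{22}](\tilde C_{k+1})_{2j}$. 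The main obstacle is the case $B_k=\pm\mathrm{diag}(1,-1)$: here $\tilde M_{k+1}^\sigma:=\tilde M_k^\sigma\,\mathrm{diag}(1,-1)\,\tilde C_{k+1}$ has $(i,j)$-entry equal to the \emph{difference} $(\tilde M_k)_{i1}(\tilde C_{k+1})_{1j}-(\tilde M_k)_{i2}(\tilde C_{k+1})_{2j}$, which a priori could be negative. The three inductive invariants are designed precisely so that these subtractions stay non-negative and propagate: column dominance of $\tilde M_k^\sigma$ together with row dominance of $\tilde C_{k+1}$ yields non-negativity of the entries of $\tilde M_{k+1}^\sigma$ via the elementary fact that $A\geq C\geq 0$ and $B\geq D\geq 0$ imply $AB\geq CD$; diagonal dominance of $\tilde M_k^\sigma$ is exactly what preserves row dominance of $\tilde M_{k+1}^\sigma$ through the computation $XA_j-YB_j\geq 0$ (with $X=(\tilde M_k)_{11}-(\tilde M_k)_{21}\geq Y=(\tilde M_k)_{12}-(\tilde M_k)_{22}$ and $A_j\geq B_j$).

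Finally the theorem for $p_k^\sigma=\mathrm{tr}\,M_k^\sigma=(D_1^{(k)})_{11}(D_2^{(k)})_{11}(\tilde M_k^\sigma)_{11}+(D_1^{(k)})_{22}(D_2^{(k)})_{22}(\tilde M_k^\sigma)_{22}$ follows from the inductive claim. If $(D_1^{(k)})_{11}(D_2^{(k)})_{11}=(D_1^{(k)})_{22}(D_2^{(k)})_{22}$, the trace is a common-sign scalar times the sum of two non-negative-coefficient polynomials. Otherwise, the chain $(\tilde M_k^\sigma)_{11}\geq (\tilde M_k^\sigma)_{21}\geq (\tilde M_k^\sigma)_{22}$ (by row then column dominance) implies that their signed difference still has one-sign coefficients. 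A short bookkeeping shows that the common sign of the coefficients of $p_k^\sigma$ is $(-1)^k\prod_{j=1}^k\epsilon_j\delta_j$.
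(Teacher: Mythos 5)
Your proof is correct, but it takes a genuinely different route from the one in the paper. The paper also strengthens the induction from the trace to a matrix-level statement, but it does so by expanding $tr(F_{k+1}^{\sigma_i}M)$ as $\pm\bigl(x_{k+1}y_{k+1}tr(F_k^{\sigma}M^{i0})+x_{k+1}tr(F_k^{\sigma}M^{i1})+y_{k+1}tr(F_k^{\sigma}M^{i2})+tr(F_k^{\sigma}M^{i3})\bigr)$ for a family of derived matrices $M^{ij}$, and then exhibiting a semigroup $\Delta$ generated by $A_4=-A^{-1}B^{-1}$, $A_4^t$, $A_5=AB^{-1}$, $A_6=A^{-1}B$ that is stable under this recursion and has positive $(1,1)$ entries and traces; the positivity there is verified by importing the ``decreasing matrix'' lemma of Bergweiler--Eremenko. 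You instead sign-normalize each factor $A^{\epsilon_j(1+x_j)}B^{\delta_j(1+y_j)}$ by $\pm1$-diagonal matrices to get $\tilde C_j$ with nonnegative coefficients, and propagate three coefficient-wise invariants (entrywise nonnegativity, row/column dominance, diagonal dominance) through the product, using diagonal dominance exactly where the bridge matrix $\pm\mathrm{diag}(1,-1)$ forces a subtraction. I checked the key identities: in the difference case the entries are $P_{i1}Q_{1j}-P_{i2}Q_{2j}$ and nonnegativity follows from column dominance of $P$ and row dominance of $Q$; row dominance of the product needs diagonal dominance of $P$; and diagonal dominance of the product equals $(P_{11}-P_{21})(Q_{11}-Q_{12})\mp(P_{12}-P_{22})(Q_{21}-Q_{22})$, which your invariants control in both bridge cases. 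The final reduction of the trace via $(\tilde M_k)_{11}\geq(\tilde M_k)_{21}\geq(\tilde M_k)_{22}$ is also sound. What your approach buys is self-containedness (no appeal to the decreasing-matrix lemma or to the external explicit formulas for $f_k,h_k,t_k,g_k$) and a stronger conclusion, namely one-signedness of all four matrix entries, not just the trace; what the paper's approach buys is a clean axiomatization (conditions P1)--P3) on a cone of matrices) that isolates exactly which linear combinations $af_k+bh_k+ct_k+dg_k$ are good. If you write this up formally, do state and prove the elementary lemma that $A\geq C\geq 0$ and $B\geq D\geq 0$ imply $AB\geq CD$ coefficient-wise, and spell out the column-dominance and diagonal-dominance propagation in the bridge case, which you currently only gesture at.
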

\ni which was conjectured by W. Bergweiler and A. Eremenko in \cite{E}.

 We prove the theorem  by induction on $k$. However it is not easy to pass from \textquotedblleft level $k$\textquotedblright  to \textquotedblleft level $k+1$\textquotedblright since that $p_k$ has the above property does not simply imply that $p_{k+1}$ has the same one. The idea here is to substitute $p_k$'s with a suitable set of polynomials containing the $p_k$'s so that the difficulty disappears. This idea is explained in section 2 (see Proposition \ref{p1}) and the theorem is showed in section 3.

 {\bf  Acknowledgment} We would like to thank Alex Eremenko for his helpful comments on the earlier draft of this paper. The first author is grateful to Jianming Chang for introducing the topic to him and for many helpful talks. 

\section{Traces}
\subsection{Good polynomials}
Set
$$F_k = \begin{pmatrix} f_k &h_k \\ t_k &g_k \end{pmatrix}
= A^{x_1}B^{y_1}A^{x_2}B^{y_2}\cdots A^{x_k}B^{y_k}$$
where $A= \begin{pmatrix} 1 &2 \\ 0 &1 \end{pmatrix},\,B= \begin{pmatrix} 1 &0 \\ -2 &1 \end{pmatrix}.$
Then the trace $p_k = trF_k = f_k + g_k$ and all $f_k, h_k, t_k, g_k$ are the polynomials
in $2k$ variables $x_1, y_1, \cdots x_k, y_k$ with integer coefficients
whose explicit formula can be found in \cite{E}.

A sequence $\sigma$ of $2k$ signs $\pm$ can be viewed as a function
$\sigma : \{1, 2, \cdots 2k \}\rightarrow \{1, -1\}$.
For any polynomial $f$ in variables  $x_1, y_1, \cdots x_k, y_k$, set
$$f^{\sigma}=f(\si (1)(1+x_1), \si (2)(1+y_1), \cdots , \si (2k-1)(1+x_k), \si (2k)(1+y_{k}))$$

\begin{defi}{A polynomial f in $2k$ variables is said to be good if for arbitrary sequence $\sigma$ of $2k$ signs,
all the coefficients of $f^{\sigma}$ have the same sign. }\end{defi}

Let $Mat(2,2)$ be the set of $2 \times 2$ matrices over $\R$, the set of real numbers. Denote by $F_k^{\sigma}$ the matrix
$\begin{pmatrix} f_k^{\sigma} &h_k^{\sigma}\\ t_k^{\sigma} &g_k^{\sigma} \end{pmatrix}$. If $M= \begin{pmatrix} a &c \\ b &d \end{pmatrix} \in Mat(2,2)$,
then \begin{equation*} \begin{array}{rl}
tr(F_kM) &= af_k + bh_k + ct_k +dg_k\\
tr(F_k^{\sigma}M) &= af_k^{\sigma} + bh_k^{\sigma} + ct_k^{\sigma} +dg_k^{\sigma}
\end{array} \end{equation*}
Write

\begin{align*}
A_1= \begin{pmatrix} 1 &0 \\ 0 &0 \end{pmatrix}&&
A_2= \begin{pmatrix} 2 &1 \\ 0 &0 \end{pmatrix}&&
A_3= \begin{pmatrix} 2 &-1 \\ 0 &0 \end{pmatrix}, \\
A_4= \begin{pmatrix} 3 &2 \\ -2 &-1 \end{pmatrix}&&
A_5= \begin{pmatrix} 5 &2 \\ 2 &1 \end{pmatrix}&&
A_6= \begin{pmatrix} 5 &-2 \\ -2 &1 \end{pmatrix}.
\end{align*}
\ni Note that
\begin{equation}
A_4 + A_5 = 4A_2, \, A_4 + A_6= 4A_3^t, \, A_4^t + A_5= 4A_2^t, \, A_4^t + A_6 = 4A_3, A_2 + A_3 = 4A_1,\label{semi}
 \end{equation}
\begin{equation}
A_4= -A^{-1}B^{-1},\, A^t_4=-AB, \, A_5= AB^{-1},\, A_6=A^{-1}B. \label{eq}
\end{equation}
Let $S$ be a subset of $Mat(2,2)$, we have
\begin{prop} \label{p1} If $S$ satisfies that
\begin{description}
\item[p1)] $ a > 0$, for all $M= \begin{pmatrix} a &c \\ b &d \end{pmatrix} \in S$,
\item[P2)] $tr(CM)\geq 0$, for each $C \in \{A_4, A_{4}^t, A_5, A_6 \},\, M \in S$,where $D^t$ stands for the transpose of the matrix $D$,
\item[P3)]$CS\subseteq S, \text{for each} \,\, C \in \{A_4, A_{4}^t, A_5, A_6 \},$
\end{description}
then $af_k + bh_k + ct_k +dg_k$ is good, for every
$M= \begin{pmatrix} a &c \\ b &d \end{pmatrix}\in S, k\geq 1.$
\end{prop}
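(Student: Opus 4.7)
My plan is induction on $k$, but with a strengthened inductive claim that tracks the common sign explicitly: for every $M\in S$ the polynomial $\mathrm{tr}(F_k M)$ is good, and every nonvanishing coefficient of $(\mathrm{tr}(F_k M))^\sigma$ carries the common sign $(-1)^k \sigma(1)\cdots\sigma(2k)$, depending on $\sigma$ and $k$ but not on $M$. The uniformity in $M$ is essential because the inductive step adds together expressions coming from four different matrices $CM\in S$, and the sum needs to remain single-signed. That the sign is independent of $M$ traces back to condition P1: the coefficient of $x_1 y_1 \cdots x_k y_k$ in $(\mathrm{tr}(F_k N))^\sigma$ equals $(-4)^k a_N\,\sigma(1)\cdots\sigma(2k)$, which is nonzero and has the stated sign precisely because the upper-left entry $a_N$ of $N\in S$ is positive.

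The engine of the argument is the following matrix identity. For each $(\epsilon,\eta)\in\{\pm 1\}^2$ and indeterminates $u,v$,
\begin{equation*}
A^{\epsilon(1+u)}B^{\eta(1+v)} = -\epsilon\eta\sum_{C\in\{A_4, A_4^t, A_5, A_6\}} q_C^{\epsilon,\eta}(u,v)\,C,
\end{equation*}
where each $q_C^{\epsilon,\eta}\in\mathbf{R}[u,v]$ has only non-negative coefficients. I obtain this by expanding $A^{\epsilon(1+u)}B^{\eta(1+v)}$ entrywise, reading off its $1,u,v,uv$ coefficient matrices as $\mathbf{R}$-linear combinations of $\{A_1, A_2, A_2^t, A_3, A_3^t\}$, and then using the relations (\ref{semi}) to rewrite each of those five auxiliary matrices as a non-negative combination of $\{A_4, A_4^t, A_5, A_6\}$.

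Granted the identity, the base case $k=1$ is immediate: with $u=x_1,\ v=y_1,\ \epsilon=\sigma(1),\ \eta=\sigma(2)$,
\begin{equation*}
(\mathrm{tr}(F_1 M))^\sigma = -\epsilon\eta\sum_C q_C^{\epsilon,\eta}(u,v)\,\mathrm{tr}(CM),
\end{equation*}
and P2 ensures each $\mathrm{tr}(CM)\geq 0$, so every coefficient has sign $-\epsilon\eta = (-1)^1\sigma(1)\sigma(2)$. For the inductive step, factor $F_{k+1} = F_k\cdot A^{x_{k+1}}B^{y_{k+1}}$, apply the decomposition with $u=x_{k+1},\ v=y_{k+1},\ \epsilon=\sigma(2k+1),\ \eta=\sigma(2k+2)$, and let $\sigma'$ denote the restriction of $\sigma$ to the first $2k$ indices. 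This produces
\begin{equation*}
(\mathrm{tr}(F_{k+1} M))^\sigma = -\epsilon\eta\sum_C q_C^{\epsilon,\eta}(u,v)\,(\mathrm{tr}(F_k\cdot CM))^{\sigma'}.
\end{equation*}
By P3 each $CM\in S$, so the inductive hypothesis applies and every $(\mathrm{tr}(F_k\cdot CM))^{\sigma'}$ has coefficients of the common sign $(-1)^k\sigma(1)\cdots\sigma(2k)$, independent of $C$. Multiplying by the non-negative polynomials $q_C^{\epsilon,\eta}$ and by $-\epsilon\eta$ preserves the uniform sign and produces $(-1)^{k+1}\sigma(1)\cdots\sigma(2k+2)$, closing the induction.

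The principal obstacle is the decomposition identity itself: it must be verified in each of the four sign patterns $(\epsilon,\eta)$ that after substituting the relations (\ref{semi}) the resulting coefficients $q_C^{\epsilon,\eta}$ all lie in $\mathbf{R}_{\geq 0}[u,v]$. This is bookkeeping rather than insight, but it is where the specific list $\{A_4, A_4^t, A_5, A_6\}$ and the five relations (\ref{semi}) are tailored so that positivity goes through. The subtler conceptual point is the need for the uniform-sign strengthening of the inductive hypothesis; without it, the sum over $C$ in the inductive step could combine polynomials with opposite signs, and it is exactly P1 --- forcing a nonzero leading coefficient --- that locks the sign and delivers this uniformity.
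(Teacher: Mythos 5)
Your proof is correct and is essentially the paper's argument: the identity $A^{\epsilon(1+u)}B^{\eta(1+v)}=-\epsilon\eta\sum_C q_C^{\epsilon,\eta}C$ is exactly the paper's decomposition into the matrices $M^{ij}$ combined with the relations (\ref{semi}) (which express $4A_1, 2A_2, 2A_2^t, 2A_3, 2A_3^t$ as non-negative combinations of $A_4, A_4^t, A_5, A_6$), and your strengthened induction hypothesis with the explicit common sign $(-1)^k\sigma(1)\cdots\sigma(2k)$, pinned down by the leading coefficient $(-4)^k a$ via P1, is precisely the content of the paper's Lemma \ref{pass}. The only difference is packaging: you fold the paper's $Cone(S)$ and Remark \ref{r1} into the non-negative polynomial weights $q_C^{\epsilon,\eta}$, which is a clean and equivalent reorganization.
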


\begin{rema} \label{r1} $S$ satisfies the conditions P1), P2) P3) if and only if so does the cone
$Cone(S)\triangleq \{ \sum a_iM_i \,|\, a_i\geq 0, M_i \in S \}$. Furthermore, any set $S$ satisfying P1) possesses the property
that $af_k + bh_k + ct_k +dg_k$ is good, for every
$M= \begin{pmatrix} a &c \\ b &d \end{pmatrix}\in S, k\geq 1$ if and only if $Cone(S)$ satisfying P1) possesses the same property. {\rm The first assertion is obvious and the second one follows from the fact that the sign of the leading term of  $F_k^{\sigma}M) = af_k^{\sigma} + bh_k^{\sigma} + ct_k^{\sigma} +dg_k^{\sigma}$ with $a>0$ is independent of $a$ (see the proof of Lemma \ref{pass})}\end{rema}

We shall prove several lemmas before proving this proposition.
\subsection{Definition of  $M^{ij}$}
Let $\sigma$ be a sequence of $2k$ signs and let $\sigma_i, i=0, 1, 2, 3$, be the sequence of $2k+2$ signs
such that (a)\, $\sigma_i(j)= \sigma (j), \text{for each}\, 1\leq j\leq 2k$ and (b)\, $\sigma_0 (2k+1)=1, \sigma_0 (2k+2)=1,
\sigma_1 (2k+1)=1, \sigma_1 (2k+2)=-1, \sigma_2 (2k+1)=-1, \sigma_2 (2k+2)=1, \sigma_3 (2k+1)=-1, \sigma_3 (2k+2)=-1.$
Obviously every sequence of $2k+2$ signs equals to $\sigma_i$, for some $\sigma$ and $i$.
For any $M= \begin{pmatrix} a &c \\ b &d \end{pmatrix}$, set
\begin{equation*}\begin{array}{ll}
M^{00} &= 4A_1M, \;\; M^{01} = 2A_3M, \;\; M^{02} = 2A_2^tM, \;\; M^{03} = A_4^tM \\
M^{10} &= 4A_1M, \;\; M^{11} = 2A_2M, \;\; M^{12} = 2A_2^tM, \;\; M^{13} = A_5M \\
M^{20} &= 4A_1M, \;\; M^{21} = 2A_3M, \;\; M^{22} = 2A_3^tM, \;\; M^{23} = A_6M \\
M^{30} &= 4A_1M, \;\; M^{31} = 2A_2M, \;\; M^{32} = 2A_3^tM, \;\; M^{33} = A_4M.
\end{array}\end{equation*}

\begin{lemm}  For any $M= \begin{pmatrix} a &c \\ b &d \end{pmatrix} \in Mat(2,2)$ and $k\geq 1$,
\begin{equation}
tr(F_{k+1}^{\sigma_i}M) = (-1)^{\tau (i)}(x_{k+1}y_{k+1}tr(F_k^{\sigma}M^{i0}) + x_{k+1}tr(F_k^{\sigma}M^{i1}) + y_{k+1}tr(F_k^{\sigma}M^{i2}) + tr(F_k^{\sigma}M^{i3})
\label{sign} \end{equation}\end{lemm}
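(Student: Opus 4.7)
The plan is a direct computation that separates the dependence on the new variables $x_{k+1},y_{k+1}$ from the rest. The key observation is that $\sigma_i$ agrees with $\sigma$ on the first $2k$ indices and only touches positions $2k+1,2k+2$, so from the factorization $F_{k+1} = F_k\cdot A^{x_{k+1}}B^{y_{k+1}}$ one gets
\[ F_{k+1}^{\sigma_i} \;=\; F_k^{\sigma}\cdot G_i, \qquad G_i := A^{\epsilon_1(1+x_{k+1})}B^{\epsilon_2(1+y_{k+1})},\]
where $(\epsilon_1,\epsilon_2)=(\sigma_i(2k+1),\sigma_i(2k+2))$ runs through the four sign patterns $(+,+),(+,-),(-,+),(-,-)$ as $i=0,1,2,3$. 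By linearity of the trace it then suffices to expand each $G_i$ as a polynomial in $x_{k+1},y_{k+1}$ with matrix coefficients in $Mat(2,2)$ and match the four coefficients against the definitions of $M^{i0},M^{i1},M^{i2},M^{i3}$.

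To carry this out I would compute each $G_i$ explicitly from $A^{s}=\begin{pmatrix}1&2s\\0&1\end{pmatrix}$ and $B^{t}=\begin{pmatrix}1&0\\-2t&1\end{pmatrix}$. Each $G_i$ is bilinear in $1+x_{k+1}$ and $1+y_{k+1}$, and using $(1+x_{k+1})(1+y_{k+1})=1+x_{k+1}+y_{k+1}+x_{k+1}y_{k+1}$ one obtains
\[ G_i \;=\; (-1)^{\tau(i)}\left(4A_1\cdot x_{k+1}y_{k+1} + 2P_i\cdot x_{k+1} + 2Q_i\cdot y_{k+1} + R_i\right)\]
for explicit matrices $P_i\in\{A_2,A_3\}$, $Q_i\in\{A_2^t,A_3^t\}$, $R_i\in\{A_4,A_4^t,A_5,A_6\}$, together with a single overall sign $(-1)^{\tau(i)}$. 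A direct check in each of the four cases gives $\tau(0)=\tau(3)=1$ and $\tau(1)=\tau(2)=0$, and shows that $(4A_1,2P_i,2Q_i,R_i)$ coincides with the left multipliers of $M$ in $(M^{i0},M^{i1},M^{i2},M^{i3})$ as tabulated in the definition; the sign assignments are consistent with the identifications (\ref{eq}) of $A_4,A_4^t,A_5,A_6$ with $\pm A^{\pm 1}B^{\pm 1}$.

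Multiplying the above expansion on the right by $M$ and taking the trace, linearity of $tr$ yields
\[ tr(F_{k+1}^{\sigma_i}M) \;=\; tr(F_k^{\sigma}G_i M) \;=\; (-1)^{\tau(i)}\sum_{j=0}^{3} m_j\cdot tr(F_k^{\sigma}M^{ij}),\]
where $m_0=x_{k+1}y_{k+1}$, $m_1=x_{k+1}$, $m_2=y_{k+1}$, $m_3=1$, which is precisely the claim. No real obstacle is expected; the only work is the careful sign bookkeeping across the four cases $(\epsilon_1,\epsilon_2)$, and this is exactly what has been encoded into the tables for $M^{ij}$ and the sign $\tau(i)$.
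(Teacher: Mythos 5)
Your proposal is correct and follows essentially the same route as the paper: factor $F_{k+1}^{\sigma_i}=F_k^{\sigma}\,A^{\epsilon_1(1+x_{k+1})}B^{\epsilon_2(1+y_{k+1})}$, expand the right-hand factor bilinearly in $x_{k+1},y_{k+1}$, and match the four matrix coefficients (with the overall sign $(-1)^{\tau(i)}$) against the table defining $M^{ij}$, then conclude by linearity of the trace. The paper carries out only the case $i=0$ explicitly and appeals to symmetry for the rest, exactly as you indicate.
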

\ni where $i= 0, 1, 2, 3, \, \tau (0)=\tau (3)= 1, \, \tau (1)= \tau (2)= 0.$
\begin{proof}For $i=0$,
\begin{align*}
F_{k+1}^{\sigma_0} &= F_k^{\sigma}A^{1+x_{k+1}}B^{1+y_{k+1}}=F_k^{\sigma}\begin{pmatrix} 1 &2+2x_{k+1} \\ 0 &1 \end{pmatrix}
\begin{pmatrix} 1 &0 \\ -2-2y_{k+1} &1 \end{pmatrix}\\
&= F_k^{\sigma}\begin{pmatrix} -3-4x_{k+1}-4y_{k+1}-4x_{k+1}y_{k+1} &2+2x_{k+1} \\ -2-2y_{k+1} &1 \end{pmatrix}\\
&= -(x_{k+1}y_{k+1}F_k^{\sigma}(4A_1) + x_{k+1}F_k^{\sigma}(2A_3) + y_{k+1}F_k^{\sigma}(2A_2^t) + F_k^{\sigma}A_4^t)
\end{align*}
So, (\ref{sign}) holds for $i=0$. Similarly for $i=1, 2, 3$.
\end{proof}

\begin{lemm} \label{pass}Let $\sigma$ be a sequence of $2k$ signs and $M= \begin{pmatrix} a &c \\ b &d \end{pmatrix}$, with $a > 0$ and
assume further that the $(1,1)$ entry of $M^{ij}$ is also positive for every $i, j$.
Then for any $0\leq i \leq 3$, all the coefficients of $tr(F_{k+1}^{\sigma_i}M)$
are of the same sign if and only if all the coefficients of $tr(F_{k}^{\sigma}M^{ij})$
are of the same sign, for $j= 0, 1, 2, 3.$
\end{lemm}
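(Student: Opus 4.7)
The plan is to extract the coefficients of $tr(F_{k+1}^{\si_i}M)$ directly from the expansion~(\ref{sign}) and then reconcile the signs of the four pieces $T_j := tr(F_k^{\si}M^{ij})$ using a leading-term comparison that exploits the positivity of the $(1,1)$ entries of the $M^{ij}$. Since each $T_j$ is a polynomial in $x_1,y_1,\ldots,x_k,y_k$ alone, and on the right-hand side of~(\ref{sign}) these are multiplied by the $\R$-linearly independent monomials $x_{k+1}y_{k+1}$, $x_{k+1}$, $y_{k+1}$, $1$ in the two remaining variables, the nonzero monomials of $tr(F_{k+1}^{\si_i}M)$ partition into four disjoint groups, one per $j$, and each coefficient of $tr(F_{k+1}^{\si_i}M)$ equals $(-1)^{\tau(i)}$ times the corresponding coefficient of exactly one $T_j$. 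This disjointness immediately yields the forward direction: a common sign on all coefficients of $tr(F_{k+1}^{\si_i}M)$ forces a common sign on each block and hence on each $T_j$.

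The converse is where the hypothesis on the $(1,1)$ entries enters. Writing $M^{ij}=\begin{pmatrix} a_{ij}&c_{ij}\\b_{ij}&d_{ij}\end{pmatrix}$, we have $T_j = a_{ij}f_k^{\si}+b_{ij}h_k^{\si}+c_{ij}t_k^{\si}+d_{ij}g_k^{\si}$. A short induction on $k$ using $F_{k+1}=F_kA^{x_{k+1}}B^{y_{k+1}}$ shows that $f_k$ has total degree exactly $2k$ with a single top-degree monomial a nonzero scalar multiple of $x_1y_1\cdots x_ky_k$, while $\deg h_k,\deg t_k\le 2k-1$ and $\deg g_k\le 2k-2$. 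Since the substitution $x_r\mapsto\si(2r-1)(1+x_r)$, $y_r\mapsto\si(2r)(1+y_r)$ preserves the total degree of each monomial, the unique top-degree term of $T_j$ comes from $a_{ij}f_k^{\si}$; with $a_{ij}>0$ its sign depends only on $\si$ and $k$, not on $j$. Consequently, if each $T_j$ has coefficients of a single sign, that sign must equal this top-degree sign, so the four common signs agree. Multiplying by $(-1)^{\tau(i)}$ then yields a single sign on all coefficients of $tr(F_{k+1}^{\si_i}M)$, completing the converse.

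The principal obstacle is the degree bookkeeping behind the leading-term claim: an induction has to establish simultaneously $\deg f_k=2k$, $\max(\deg h_k,\deg t_k)\le 2k-1$, $\deg g_k\le 2k-2$, together with nonvanishing of the leading coefficient of $f_k$, and then verify that the substitution $x_r\mapsto\si(2r-1)(1+x_r)$, $y_r\mapsto\si(2r)(1+y_r)$ contributes only lower-order corrections so that the strict degree gap survives. Once that gap is in hand, the sign argument above is purely formal.
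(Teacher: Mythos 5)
Your proposal is correct and follows essentially the same route as the paper: both arguments combine the block decomposition coming from~(\ref{sign}) (the four pieces $tr(F_k^{\sigma}M^{ij})$ sit on disjoint sets of monomials in $x_{k+1},y_{k+1}$) with the observation that, because $\deg f_k=2k$ strictly exceeds $\deg h_k,\deg t_k,\deg g_k$ and the leading term of $f_k$ is $(-1)^k4^kx_1y_1\cdots x_ky_k$, the common sign of any $tr(F_k^{\sigma}N)$ with positive $(1,1)$ entry is forced to be $(-1)^{k+\sharp(\sigma)}$, hence the same for all four blocks. The only cosmetic difference is that you derive the degree and leading-term facts by induction on $F_{k+1}=F_kA^{x_{k+1}}B^{y_{k+1}}$, whereas the paper reads them off the explicit formulas cited from \cite{E}.
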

\begin{proof}
Note that the explicit formula of $f_k, h_k, t_k, g_k$ in \cite{E} implies that
$deg(h_k)=deg(t_k)=2k-1, deg(g_k)=2k-2, deg(f_k)=2k$ and  the leading term of $f_k$ is $(-1)^k4^kx_1y_1\cdots x_ky_k$.
Hence if all the coefficients of $tr(F_{k}^{\sigma}M) = af_{k}^{\sigma}+ bh_{k}^{\sigma}+ ct_{k}^{\sigma} +dg_{k}^{\sigma}$, with $a > 0$,
are of the same sign, then all the coefficients have the  same sign with $(-1)^{k + \sharp(\sigma)}$ where $\sharp(\sigma)$
is the number of negative signs that $\sigma$ takes. Now the lemma follows immediately from $(\ref{sign})$.
\end{proof}

We have $f_1= 1- 4x_1y_1, \, h_1= 2x_1,\, t_1= -2y_1,\, g_1= 1.$
If set $F_0 = E$, the identity matrix,  then $(\ref{sign})$ also holds for $k=0$, i.e.
\begin{equation}
tr(F_1^{\sigma_i}M) = (-1)^{\tau (i)}(x_{1}y_{1}tr(M^{i0}) + x_{1}tr(M^{i1}) + y_{1}tr(M^{i2}) + tr(M^{i3})
 \label{sign1}\end{equation}
 where the sequences $\sigma_0, \sigma_1, \sigma_2, \sigma_3$ of $2$ signs are respectively $\{+, +\}, \{+, -\}, \{-, +\}, \{-, -\}$.
Let $M= \begin{pmatrix} a &c \\ b &d \end{pmatrix}$.

\begin{lemm}\label{leval1}
For $a>0$ and assume the $(1,1)$ entry of $M^{ij}$ is positive, then $af_1 + bh_1 + ct_1 + dg_1$ is good if and only if $tr(M^{i3}) \geq 0, i=0, 1, 2, 3.$\end{lemm}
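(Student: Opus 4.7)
The plan is to evaluate, via formula (\ref{sign1}), the polynomial $af_1^{\sigma_i}+bh_1^{\sigma_i}+ct_1^{\sigma_i}+dg_1^{\sigma_i}=tr(F_1^{\sigma_i}M)$ coefficient by coefficient in $x_1,y_1$, and then translate the "same sign" condition into inequalities on the traces $tr(M^{ij})$. By (\ref{sign1}), the four coefficients of $tr(F_1^{\sigma_i}M)$, read off from the monomials $x_1y_1$, $x_1$, $y_1$, $1$, are $(-1)^{\tau(i)}$ times $tr(M^{i0})$, $tr(M^{i1})$, $tr(M^{i2})$, $tr(M^{i3})$ respectively. Since $M^{i0}=4A_1M$ has trace $4a>0$, the $x_1y_1$-coefficient $(-1)^{\tau(i)}\cdot 4a$ is a nonzero number of sign $(-1)^{\tau(i)}$, so all four coefficients having a common sign is equivalent to $tr(M^{ij})\geq 0$ for $j=1,2,3$ (and each $i=0,1,2,3$).

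The "only if" direction is then immediate: goodness of $af_1+bh_1+ct_1+dg_1$, applied to each $\sigma_i$, forces in particular the constant term $(-1)^{\tau(i)}tr(M^{i3})$ to share its sign with the nonzero leading term $(-1)^{\tau(i)}\cdot 4a$, and hence $tr(M^{i3})\geq 0$ for each $i$.

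For the "if" direction, the remaining task is to show that the four hypotheses $tr(M^{i3})\geq 0$ automatically imply $tr(M^{ij})\geq 0$ for $j=1,2$. This is where I would invoke the linear relations (\ref{semi}). Combining the definitions of $M^{ij}$ with the identities $A_4^t+A_5=4A_2^t$, $A_4+A_6=4A_3^t$, $A_4^t+A_6=4A_3$ and $A_4+A_5=4A_2$ gives, for example, $2\,tr(M^{02})=tr(M^{03})+tr(M^{13})$, and analogously $2\,tr(M^{22})=tr(M^{23})+tr(M^{33})$, $2\,tr(M^{01})=tr(M^{03})+tr(M^{23})$, $2\,tr(M^{11})=tr(M^{13})+tr(M^{33})$; the remaining $M^{i1},M^{i2}$ coincide with these by definition. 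Non-negativity of the four traces $tr(M^{i3})$ therefore propagates to all twelve traces $tr(M^{ij})$, and goodness follows from formula (\ref{sign1}).

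The only non-trivial step is the bookkeeping of matching each identity in (\ref{semi}) to the correct pair $(M^{i3},M^{i'3})$ summing to $2\,tr(M^{ij})$; there is no real analytic difficulty. It is worth noting that the hypothesis on the $(1,1)$-entries of the $M^{ij}$ is not actually needed in the equivalence itself — only $a>0$ is used — but it is stated here precisely so that Lemma \ref{leval1} dovetails with the inductive step supplied by Lemma \ref{pass}.
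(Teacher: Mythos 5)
Your proof is correct and follows essentially the same route as the paper: use (\ref{sign1}) to identify the coefficients of $tr(F_1^{\sigma_i}M)$ with $(-1)^{\tau(i)}tr(M^{ij})$, note the leading coefficient $(-1)^{\tau(i)}\cdot 4a$ is nonzero so goodness becomes $tr(M^{ij})\geq 0$ for all $i,j$, and then use the relations (\ref{semi}) to reduce to the four traces $tr(M^{i3})$. Your explicit bookkeeping of which identity in (\ref{semi}) yields which $M^{ij}$, and your observation that only $a>0$ is actually used, are both accurate elaborations of what the paper leaves as "easy to see."
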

\begin{proof}
It is easy to see by $(\ref{sign1})$ that $af_1 + bh_1 + ct_1 + dg_1$   is good if and only if  $tr(M^{ij}) \geq 0, \text{for all} \,i, j$. Now the lemma follows immediately from (\ref{semi}).

\end{proof}

\subsection{Proof of Proposition \ref{p1}}
 We prove it by induction on $k$. For $k=1$, $M= \begin{pmatrix} a &c \\ b &d \end{pmatrix} \in S$, we have $af_1 + bh_1 + ct_1 + dg_1$ is good by Lemma \ref{leval1}. Now assume  $af_k + bh_k + ct_k + dg_k$ is good,
for all $M= \begin{pmatrix} a &c \\ b &d \end{pmatrix} \in S$. One deduces first that  all $M^{ij}$
are contained $Cone(S)$ by (\ref{semi}) and the condition {p3)} that $S$ satisfies, and then that $tr(F_kM^{ij})$ is good
by the induction hypothesis and Remark \ref{r1}. Therefore $af_{k+1} + bh_{k+1} + ct_{k+1} + dg_{k+1}$ is good as well,  by Lemma \ref{pass}. \qed

\section{Proof of the Main Theorem}
\subsection{Decreasing matrices}
 A matrix $X = \begin{pmatrix} a &c\\b &d \end{pmatrix}$ is said to be decreasing if $|a| > |b| > |d| \,\text{and}\, |a| > |c| > |d|$, according to \cite{E}. The following lemma is proved in \cite{E}.
\begin{lemm}\label{dec}Let $X \in \Gamma (2)$ be decreasing and $m, n \in \Z\backslash \{0\}$, then $Y=A^mB^nX$ is decreasing.\end{lemm}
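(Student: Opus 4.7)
The plan is to prove Lemma~\ref{dec} by direct entrywise computation. Expand
\[
A^mB^n = \begin{pmatrix} 1-4mn & 2m \\ -2n & 1 \end{pmatrix}
\]
and write $Y = A^mB^nX = \begin{pmatrix} a' & c' \\ b' & d' \end{pmatrix}$, so that
\[
b' = b - 2na,\quad d' = d - 2nc,\quad a' = a + 2mb',\quad c' = c + 2md'.
\]
Equivalently, factor $Y = A^m Z$ with $Z = B^n X$, which replaces only the bottom row of $X$. I need to establish the four inequalities $|a'|>|b'|$, $|b'|>|d'|$, $|a'|>|c'|$, $|c'|>|d'|$.

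Two of them are immediate from two applications of the triangle inequality. From $|n|\geq 1$ and $|a|>|b|$, I get $|b'|\geq 2|n||a|-|b|>|a|$, and analogously $|d'|>|c|$. Then $|m|\geq 1$ combined with $|b'|>|a|$ gives $|a'|\geq 2|m||b'|-|a|>|b'|$, and $|c'|>|d'|$ in the same way.

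The subtle inequalities are $|b'|>|d'|$ and $|a'|>|c'|$. Here the triangle inequality alone is insufficient, and I will use that $X \in \Gamma(2)$, not merely $X \in SL_2(\mathbb{Z})$. Since $a,d$ are odd and $b,c$ are even, the decreasing hypothesis with $|d|\geq 1$ forces $|a|\geq 3$, $|b|,|c|\geq 2$, $|d|\geq 1$; combined with $|\det X|=1$, this rules out $\mathrm{sign}(ad)\neq\mathrm{sign}(bc)$, since in that case $|ad-bc|=|ad|+|bc|\geq 7$. Hence $|ad|-|bc|=\pm 1$, which pins the relative signs of the pairs $(b,na)$ and $(d,nc)$ to be either both aligned or both anti-aligned; in both configurations
\[
|b'|-|d'| = 2|n|\bigl(|a|-|c|\bigr) \pm \bigl(|b|-|d|\bigr).
\]
The $+$ case is immediate from the decreasing hypothesis; the $-$ case reduces to the elementary inequality $|a|-|c|>(|b|-|d|)/(2|n|)$, which falls out of $|a||d|-|b||c|=\pm 1$ by a short case split on the sign (expressing $|a|$ as $(|b||c|\pm 1)/|d|$ and subtracting $|c|$). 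An entirely parallel argument one level up, applied to $Z \in \Gamma(2)$ whose bottom row $(b',d')$ now plays the role of the dominant row, yields $|a'|>|c'|$ via the inequality $2|m|(|b'|-|d'|)>|a|-|c|$.

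I expect the main obstacle to be the aligned-sign subcase in the analysis of $|b'|-|d'|$, where the triangle inequality is too weak and one must genuinely invoke the determinant identity $|ad|-|bc|=\pm 1$; once that elementary inequality is cleared, the remaining bookkeeping is mechanical.
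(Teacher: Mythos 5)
Your proof is correct, but there is nothing in the paper to compare it against: the authors do not prove this lemma, they simply quote it from \cite{E}, so your write-up serves as a self-contained substitute. The skeleton is sound: with $Y=A^m(B^nX)$ one gets $|b'|\geq 2|n||a|-|b|>|a|$ and $|d'|>|c|$, hence $|a'|\geq 2|m||b'|-|a|>|b'|$ and $|c'|>|d'|$, leaving only $|b'|>|d'|$ and $|a'|>|c'|$. Your sign analysis is also right: the parity constraints from $\Gamma(2)$ force $|a|\geq 3$, $|b|,|c|\geq 2$, $|d|\geq 1$, so $\det X=1$ rules out $\mathrm{sign}(ad)\neq\mathrm{sign}(bc)$, and $\mathrm{sign}(ad)=\mathrm{sign}(bc)$ is equivalent to the simultaneous alignment (or anti-alignment) of $(b,na)$ and $(d,nc)$. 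The one step you leave compressed, the ``$-$'' subcase, does close: with $\alpha=|a|,\beta=|b|,\gamma=|c|,\delta=|d|$ and $\alpha\delta-\beta\gamma=\varepsilon=\pm1$, one has $\alpha-\gamma=(\gamma(\beta-\delta)+\varepsilon)/\delta$, hence $2|n|(\alpha-\gamma)-(\beta-\delta)\geq\bigl((2\gamma-\delta)(\beta-\delta)+2\varepsilon\bigr)/\delta>0$ since $2\gamma-\delta\geq\gamma+1\geq 3$ and $\beta-\delta\geq 1$. The identical computation applied to $Z=B^nX$ (still of determinant $1$, with dominant bottom row $(b',d')$ and smallest entry $|c|$) yields $2|m|(|b'|-|d'|)>|a|-|c|$ and hence $|a'|>|c'|$. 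So the argument is complete as sketched; the only polish needed is to spell out that elementary determinant inequality once and observe that it is invoked twice with the roles of the rows exchanged.
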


\subsection{Proof of Theorem \ref{main}}
Let $\Delta = \{ K_1K_2\cdots K_n \,| \, n\geq 0,  K_i = A_4, A_{4}^t, A_5, A_6 \}$ (for $n=0$, we mean the identity matrix $E$). By (\ref{eq}) every matrix except the identity matrix in $\Delta$ is decreasing by Lemma \ref{dec}. Now assume every word
of length $n, M=K_1K_2\cdots K_n =  \begin{pmatrix} a &c\\b &d \end{pmatrix}$ in $\Delta$, has the property that $a > 0$. Then for any word in $\Delta$ of length $n+1, M'=K_1K_2\cdots K_{n+1}= \begin{pmatrix} a' &c'\\b' &d' \end{pmatrix}$, it is easy to show that $a' > 0$ since $M$ is decreasing and $M'=MK_{n+1}$ with $K_{n+1}\in \{A_4, A_{4}^t, A_5, A_6 \}$. Hence we have proved, by induction, that for every
$C =  \begin{pmatrix} a &c\\b &d \end{pmatrix}\in \Delta,\, a > 0$, that is, $\Delta$ satisfies the condition P1).

In addition it is easy to see that the trace of a decreasing matrix whose $(1,1)$ entry is positive is always positive. Thus $\Delta$ satisfies the condition P2) as well. Meanwhile  $\Delta$ obviously satisfies the conditions  P3) by the definition of $\Delta$. Therefore  $af_k + bh_k + ct_k + dg_k$ is good, for all $k, \forall \, M= \begin{pmatrix} a &c \\ b &d \end{pmatrix} \in \Delta$, by  Proposition \ref{p1}. Now Theorem \ref{main} follows. \qed


\begin{thebibliography}{999}\vskip0pt\small
\parindent=2ex\parskip=-1pt\baselineskip=-1pt

\bibitem {E}W. Bergweiler and A. Eremenko, Goldberg's constant, preprint.



\end{thebibliography}
\end{document}